\theoremstyle{plain}
\newtheorem{thm}{\protect\theoremname}
  \theoremstyle{definition}
  \newtheorem{defn}[thm]{\protect\definitionname}
  \theoremstyle{definition}
  \newtheorem{example}[thm]{\protect\examplename}
  \theoremstyle{remark}
  \newtheorem{rem}[thm]{\protect\remarkname}
  \theoremstyle{plain}
  \newtheorem{lem}[thm]{\protect\lemmaname}
\DeclareMathOperator{\Vol}{Vol}
\DeclareMathOperator{\hup}{{\bf h}_{upp}}
\DeclareMathOperator{\hit}{{\bf h}_{ite}}
\DeclareMathOperator{\itt}{ite}
\DeclareMathOperator{\fI}{{\cal I}}
\DeclareMathOperator{\dd}{(\textit{d}_1,\textit{d}_2,\dots,\textit{d}_\textit{n})}
\DeclareMathOperator{\Bsys}{\textit{B}_{\cal I}-system}
\DeclareMathOperator{\BsysJ}{\textit{B}_{\cal J}-system}
\DeclareMathOperator{\Ecirc}{\textit{E}_{\circ}\left({\cal I};\textit{d}_{1},\dots,\textit{d}_{\textit{n}}\right)}
\DeclareMathOperator{\Edot}{\textit{E}_{\bullet}\left({\cal I};\textit{d}_{1},\dots,\textit{d}_{\textit{n}}\right)}
\DeclareMathOperator{\BsysPrime}{\textit{B}_{\cal I'}-system}
  \providecommand{\definitionname}{Definition}
  \providecommand{\examplename}{Example}
  \providecommand{\lemmaname}{Lemma}
  \providecommand{\remarkname}{Remark}
\providecommand{\theoremname}{Theorem}
\begin{document}

\title{Combinatorial Excess Intersection}

\author{Jose Israel Rodriguez%
\thanks{The author is supported by the US National Science Foundation DMS-0943745. %
}}
\date{}

\maketitle
\begin{abstract}
We provide formulas and develop algorithms for computing the excess
numbers of an ideal. The solution for monomial ideals is given by
the mixed volumes of polytopes. These results enable us to design
numerical algebraic geometry homotopies to compute excess numbers
of any ideal. 
\end{abstract}
\section{Introduction}

Consider a homogeneous ideal $\fI\subset\mathbb{C}\left[x_{0},\dots,x_{n}\right]$, and
let $f_{1},\dots,f_{n}$ be homogenous polynomials in $\fI$. 
Since $\left(f_{1},\dots,f_{n}\right)\subset\fI$,
we have  ${\bf V}\left(f_{1},\dots,f_{n}\right)\supset{\bf V}\left(\fI\right)$.
The $\emph{excess intersection }$ of the variety of $\left(f_{1},f_{2},\dots,f_{n}\right)$
with respect to the variety of $\fI$ is defined as the quasiprojective variety
${\bf V}\left(f_{1},\dots,f_{n}\right)\backslash{\bf V}\left(\fI\right)$.
We define the $\emph{excess number }$ $E_{\bullet}\left(\fI;f_{1},\dots,f_{n}\right)$
of an ideal $\fI$ to be the number of  solutions in
${\bf V}\left(f_{1},\dots,f_{n}\right)\backslash{\bf V}\left(\fI\right)$.

Excess intersections are a well  studied problem with applications in enumerative geometry, machine learning  \cite{franzConj,franzGeneric}, and algebraic statistics \cite{jostEC}. 
In addition, there is a well developed theory of Segre classes to study this problem that has been exploited in 
 \cite{segreProj,segre2011,segreToric} using computational algebraic geometry as well.
Recent work by Paolo Aluffi has pushed this area even further in \cite{paSegrePolytopes}.
However, the motivation for this paper came at the 2012 Institute for Mathematics and
its Applications Participating Institution Summer Program for Graduate
Students in Algebraic Geometry for Applications by Mike Stillman.
We will focus on the numerical algebraic geometry perspective,
where it is ideal to solve $square$ systems of equations, meaning
the number of unknowns equals the number of equations.
So by understanding
the zero-dimensional solutions of an excess intersection of an ideal, we can 
 study the ideal itself. 
Our computations were performed with ${\tt Bertini}$, ${\tt PHCpack}$,
and ${\tt Macaulay2}$.

We begin our study in the case that $\fI$ is an ideal generated by 
$B_1,B_2,\dots,B_l$,
and $f_{1},\dots,f_{n}$ define a $\Bsys$ of equations of degree $\dd$.

\begin{defn}\label{defBsys}
Let $\fI$ be an ideal of $\mathbb{C}\left[x_{0},\dots,x_{n}\right]$
generated by $B_{1},\dots,B_{l}$ whose respective degrees are
$p_{1},\dots,p_{l}$.
Suppose $\dd$ is such that $$\min\dd\geq \max(p_1,\dots,p_l).$$
Let $a_{ij}$ denote a
form of  degree  $d_{i}-\deg B_{j}$. 
If the forms $f_{1},\dots,f_{n}$ are given by 
\[
\left[\begin{array}{l}
f_{1}\\
f_{2}\\
\vdots\\
f_{n}
\end{array}\right]=\left[\begin{array}{lll}
a_{11} & \cdots & a_{1l}\\
a_{21} & \cdots & a_{2l}\\
\vdots &  & \vdots\\
a_{n1} & \cdots & a_{nl}
\end{array}\right]\left[\begin{array}{l}
B_{1}\\
\vdots\\
B_{l}
\end{array}\right],
\]
then 
we say $f_{1},\dots,f_{n}$ are 
a $\Bsys$ of degree $\dd$.
\end{defn}

The space of $\Bsys$'s with degree $\dd$ is parameterized by the coefficients of the homogeneous polynomials $a_{ij}$. 
If $\fI$  is generated by $B_1,\dots,B_l$, then we denote the excess number of a general $\Bsys$  with degree $\dd$ as $\Edot$. 

In the first section we will be interested in determining excess numbers of $\Bsys$'s where $B_1,\dots,B_l$  are monomials. 

At times it will be more convenient to work with the equivalence number 
$$\Ecirc:=d_{1}\cdots d_{n}-\Edot.$$
This definition is inspired by the notion of the  \emph{equivalence of
an ideal} in \cite{intersectionBook} {[}Chapter $6${]}. This number is the difference
between the Bezout bound and the excess number in the cases we consider. 
The contributions
of the paper include numerical algebraic geometry algorithms to compute
excess numbers and a combinatorial proof of the theorem below.
This theorem can be proven easily using Fulton-MacPherson intersection theory, and in fact doing so generalizes the result to any ideal generated by a regular sequence. 
But  in
the proof we present, we will see how $\Edot$
and $\Ecirc$ relate to
the volume of a subdivided simplex. 
The algorithms we present take
advantage of the polyhedral structure in our problem 
to give bounds (lower and upper-bound) for an excess number.
\begin{thm}
\label{thm:mainResult} Let $\fI$ be an ideal of $\mathbb{C}\left[x_{0},\dots,x_{n}\right]$
generated by $B_1,B_2,\dots,B_k$ such that $B_i=x_{i}^{p_{i}}$.
If $f_{1},\dots,f_{n}$
define a $\Bsys $  of degree $\dd$, then
\[
E_{\bullet}\left(\fI;d_{1},\dots,d_{n}\right)+p_{1}\cdots p_{k}\sum_{\delta=0}^{n-k}\left(\left(-1\right)^{\delta}{\cal D}_{n-k-\delta}{\cal P}_{\delta}\right)=d_{1}d_{2}\cdots d_{n}
\]
where ${\cal D}_{n-k-\delta}$ is the degree $n-k-\delta$ elementary symmetric function evaluated at  $d_{1},\dots,d_{n}$  and $P_{\delta}$
is the degree $\delta$ complete homogenous symmetric function evaluated at  $p_{1},\dots,p_{k}$. 
\end{thm}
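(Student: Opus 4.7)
The plan is to interpret $E_{\bullet}$ and $E_{\circ}$ as mixed volumes arising from a canonical polytopal subdivision of the simplex $d_{i}\Delta$. Working in the affine chart $x_{0}=1$, the support of $f_{i}=\sum_{j}a_{ij}x_{j}^{p_{j}}$ is the set of lattice points $\beta\in\mathbb{Z}_{\ge 0}^{n}$ with $|\beta|\le d_{i}$ and $\beta_{j}\ge p_{j}$ for some $j\le k$; a direct vertex check identifies its Newton polytope with $N_{i}=d_{i}\Delta\cap\{\sum_{j\le k}\beta_{j}/p_{j}\ge 1\}$, whose complementary polytope in $d_{i}\Delta$ is
\[
M_{i}=\bigl\{\beta\in\mathbb{R}_{\ge 0}^{n}:|\beta|\le d_{i}\text{ and }\textstyle\sum_{j\le k}\beta_{j}/p_{j}\le 1\bigr\}.
\]
For a generic $B_{\fI}$-system, the restriction of $(f_{1},\dots,f_{n})$ to any coordinate hyperplane or to $\{x_{0}=0\}$ is overdetermined, so every point of $\mathbf{V}(f_{1},\dots,f_{n})\setminus\mathbf{V}(\fI)$ lies in the open torus of $\mathbb{P}^{n}$, and Bernstein--Kushnirenko then gives $E_{\bullet}=\mathrm{MV}(N_{1},\dots,N_{n})$. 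Combined with $\mathrm{MV}(d_{1}\Delta,\dots,d_{n}\Delta)=d_{1}\cdots d_{n}$, the theorem reduces to the claim $\mathrm{MV}(M_{1},\dots,M_{n})=p_{1}\cdots p_{k}\sum_{\delta}(-1)^{\delta}\mathcal{D}_{n-k-\delta}\mathcal{P}_{\delta}$.

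The crucial combinatorial step is a Minkowski-sum identification: for $\mu_{i}\ge 0$, setting $A=\sum_{i}\mu_{i}$ and $D=\sum_{i}\mu_{i}d_{i}$, I claim
\[
\begin{aligned}
\textstyle\sum_{i}\mu_{i}N_{i}&=\bigl\{\beta\ge 0:|\beta|\le D\text{ and }\textstyle\sum_{j\le k}\beta_{j}/p_{j}\ge A\bigr\},\\
\textstyle\sum_{i}\mu_{i}M_{i}&=\bigl\{\beta\ge 0:|\beta|\le D\text{ and }\textstyle\sum_{j\le k}\beta_{j}/p_{j}\le A\bigr\}.
\end{aligned}
\]
The inclusion $\subseteq$ is immediate because each vertex of $N_{i}$---of type $p_{j}e_{j}$, $d_{i}e_{j}$ with $j\le k$, or $p_{j}e_{j}+(d_{i}-p_{j})e_{l}$ with $j\le k$---satisfies $\sum_{j\le k}\beta_{j}/p_{j}\ge 1$. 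For the reverse inclusion one enumerates the vertices of the right-hand region and exhibits each as $\sum_{i}\mu_{i}\beta^{(i)}$ with $\beta^{(i)}$ a vertex of $N_{i}$. The two polytopes then partition $D\Delta=\sum_{i}\mu_{i}d_{i}\Delta$ along the hyperplane $\sum_{j\le k}\beta_{j}/p_{j}=A$, so their volumes add, and extracting the coefficient of $\mu_{1}\cdots\mu_{n}$ yields $\mathrm{MV}(d_{1}\Delta,\dots,d_{n}\Delta)=\mathrm{MV}(N_{1},\dots,N_{n})+\mathrm{MV}(M_{1},\dots,M_{n})$; hence $E_{\circ}=\mathrm{MV}(M_{1},\dots,M_{n})$.

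Finally, the identification above shows that $\sum_{i}\mu_{i}M_{i}$ depends on the $\mu_{i}$ only through $A$ and $D$, so fibering over $\alpha$ in the scaled simplex $A\cdot S_{k}$, where $S_{k}=\{\alpha\ge 0:\sum_{j\le k}\alpha_{j}/p_{j}\le 1\}$, gives
\[
\Vol\Bigl(\textstyle\sum_{i}\mu_{i}M_{i}\Bigr)=\frac{1}{(n-k)!}\int_{A\cdot S_{k}}(D-|\alpha|)^{n-k}\,d\alpha.
\]
Changing variables $\alpha=A\alpha'$, expanding $(D-A|\alpha'|)^{n-k}$ binomially, and evaluating the moments $\int_{S_{k}}|\alpha'|^{\delta}\,d\alpha'$ via the Dirichlet identity $\int_{\Delta_{k}}(\sum p_{j}\beta_{j})^{\delta}d\beta=\delta!\,\mathcal{P}_{\delta}(p)/(k+\delta)!$ produces, as the coefficient of $\mu_{1}\cdots\mu_{n}$, exactly $p_{1}\cdots p_{k}\sum_{\delta=0}^{n-k}(-1)^{\delta}\mathcal{D}_{n-k-\delta}\mathcal{P}_{\delta}$. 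The main obstacle will be establishing the Minkowski-sum partition: since $N_{i}$ is only implicitly the convex hull of a union of translated simplices, the reverse inclusion for $\sum_{i}\mu_{i}N_{i}$ requires a careful enumeration of the vertex types of the right-hand region, with each vertex exhibited as a $\mu$-weighted sum of explicit vertices of the individual $N_{i}$.
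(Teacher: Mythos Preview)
Your proposal is correct and follows essentially the same route as the paper: reduce $E_{\bullet}$ to a mixed volume via Bernstein's theorem (the paper's Lemma~\ref{thm:translateProblem}), identify $\sum_{i}\mu_{i}N_{i}$ and its complement in $D\Delta$ with the two pieces of a simplex sliced by the hyperplane $\sum_{j\le k}\beta_{j}/p_{j}=A$ (Lemma~\ref{lem: simplexBounds}), and compute the complementary volume by the same fibered integral and the same simplex-moment formula (Lemma~\ref{lem: finalIntegral}). The one place the paper is slicker is precisely the step you flag as the main obstacle: instead of enumerating vertices, the paper notes that every $N_{i}$ has the \emph{same} list of $n+2$ facet normals, so by standard Minkowski-sum theory \cite{polytopeBook} the sum $\sum_{i}\mu_{i}N_{i}$ has that same normal fan and hence the claimed half-space description for free---no vertex bookkeeping needed.
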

The paper is structured as follows. We consider the case when $\fI$
is a monomial ideal, and show excess numbers equal mixed volumes of
polytopes (Lemma $\ref{thm:translateProblem}$). By further restricting
to the case when the ideal $\fI$ defines a complete intersection
that is also a linear space (though not necessarily reduced), we do
a mixed volume computation (Lemma $\ref{lem: finalIntegral}$) to
get an explicit formula for excess numbers. 
In the final section, we present our algorithms that take advantage of the first sections results.

\subsection*{Acknowledgements }

The author would like to thank Alicia Dickenstein for her many helpful comments
to improve this  paper as well as his advisor Bernd Sturmfels.

\section{The Monomial Case}

The key idea to Theorem $\ref{thm:mainResult}$ is to cast our excess
intersection problem in the language of combinatorial geometry and
prove Lemma $\ref{thm:translateProblem}$. Since the following proofs
will use Newton polytopes, Minkowski sums, and genericity, we set
up additional notation here. 

The $\emph{Newton polytope }$ of a form $f$ will be denoted as ${\cal N}\left(f\right)$.
The standard $n$-simplex is the convex hull of the origin $\epsilon_{0}$,
and the standard basis of unit vectors $\epsilon_{1},\dots,\epsilon_{n}$
in $\mathbb{R}^{n}$. The binary operation, Minkowski sum, will be
denoted as ``$+$''. 

Now, we give examples of $\Bsys$'s.

\begin{example}\label{trivalExample}
Let $\fI=\left(1\right)$ be the trivial ideal of $\mathbb{C}[x_0,x_1]$ so that $B_1=1$ generates $\fI$.
Then, a $\Bsys$ of degree $d_1$ is given by $1$ homogenous polynomials $f_1$
\[
f_{1}=a_{1}\cdot 1.
\]
Here, $a_{1}$ is a polynomial of degree $d_1$. 
For a general choice of $a_1$, the  excess number is $d_1$. When the coefficients of $a_1$ are specially chosen the excess number can decrease. In this case, the excess number can only be less than $d_i$ if the discriminant of $a_1$ vanishes. 
\end{example}

Whenever $\fI$ is a principal ideal, the excess numbers are
easy to determine algebraically. 
\begin{example}
Let $\fI=\left(B_1\right)$ be a principal ideal of the ring $\mathbb{C}\left[x_{0},\dots,x_{n}\right]$, and let $\deg B_1=p$.
Then, a $\Bsys$ of degree $\dd$ is given by $n$ homogenous polynomials $f_1,\dots,f_n:$
\[
f_{1}=a_{1}B_1,\, f_{2}=a_{2}B_1,\dots,f_{n}=a_{n}B_1.
\]
Here, $a_{i}$ is a polynomial of degree $d_i-p$. 
To determine the excess number $E_{\bullet}\left(\fI;d_{1},\dots,d_{n}\right)$,
we saturate $\left(f_{1},\dots,f_{n}\right)$ by $\fI$. Doing
so, we conclude that the excess intersection of $\left(f_{1},\dots,f_{n}\right)$
is defined by $\left(a_{1},\dots,a_{n}\right)$ and consists of finitely
many points. By Bezout's theorem, it follows that $E_{\bullet}\left(\fI;d_{1},\dots,d_{n}\right)\leq\left(d_{1}-p\right)\left(d_{2}-p\right)\cdots\left(d_{n}-p\right)$. 
When the $a_i$ are general polynomials of degree $d_i-p$, the equality above holds. 
\end{example}

\begin{example}\label{monTC}
Let the ideal $\fI\subset\mathbb{C}\left[x,y,z,w\right]$
be generated by the forms 
\[
B_{1}=z^{2},\: B_2=yw,\: B_{3}=yz,\: B_4=xw,\: B_{5}=y^{2},\:B_6=xz.
\]
Then, a $\Bsys$ of degree $(2,2,2)$ is a system of $3$ quadrics which are linear combinations of 
$B_1,\dots,B_6$.
A general $\Bsys$ in this case has four solutions not contained in $\textbf{V}(\fI)$. So $E_\bullet(\fI;2,2,2)=4$.
Four is also the the mixed volume of the Newton polytopes of $f_1,f_2,f_3$. In Lemma \ref{translateProblem}, we will see that this is not a coincidence. 
\end{example}

\begin{example}\label{TCFail}
Let the ideal $\fI\subset\mathbb{C}\left[x,y,z,w\right]$
be generated by the forms 
\[
B_{1}=z^{2}-yw,\: B_{2}=yz-xw,\: B_{3}=y^{2}-xz,
\]
Then a $\Bsys$ of degree $(2,2,2)$ is a system of $3$ quadrics which are linear combinations of 
$B_1,B_2,B_3$.
A general $\Bsys$ in this case is equal to the ideal $\fI$ and has no solutions outside of 
$\textbf{V}(\fI)$. So $E_\bullet(\fI;2,2,2)=0$.
\end{example}

In Example \ref{monTC} the excess number was a mixed volume of the Newton polytopes of $f_i$, but in  Example \ref{TCFail}, this was not the case.
Now, we explain the differences between these two situations. 

\begin{lem}\label{thm:translateProblem}
Let $\fI\subset\mathbb{C}\left[x_{0},\dots,x_{n}\right]$ be
an ideal defined by the monomials $B_1,\dots,B_l$. 
If $f_{1},\dots,f_{n}$ are a general  $\Bsys$ of degree $\dd$,  then the excess number $\Edot$
equals the mixed volume of the Newton polytopes ${\cal N}\left(f_{1}\right),\dots,{\cal N}\left(f_{n}\right)$. 
\end{lem}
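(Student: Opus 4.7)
The plan is to apply the Bernstein--Kushnirenko--Khovanskii (BKK) theorem in the torus of the ambient projective space and to argue that, for a generic $B_\fI$-system, the solutions in the torus are precisely the excess solutions.

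First I would identify the Newton polytope of a generic $f_i$. Writing $f_i = \sum_{j=1}^l a_{ij} B_j$ with each $a_{ij}$ a generic form of degree $d_i - \deg B_j$, the support of $a_{ij}$ fills out the standard simplex of that degree, say $\Delta_{d_i - \deg B_j}$. Letting $\beta_j$ denote the exponent vector of the monomial $B_j$, the Newton polytope is therefore
\[
\mathcal{N}(f_i) \;=\; \operatorname{conv}\bigcup_{j=1}^l \bigl(\Delta_{d_i - \deg B_j} + \beta_j\bigr).
\]
This polytope is intrinsic to the pair $(\fI, d_i)$ and does not depend on the generic choice of $a_{ij}$.

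Next, applying Bernstein's theorem in the projective torus $T = (\mathbb{C}^*)^{n+1}/\mathbb{C}^*$, for a generic $B_\fI$-system the number of common zeros of $f_1, \dots, f_n$ in $T$ equals the mixed volume $\operatorname{MV}(\mathcal{N}(f_1), \dots, \mathcal{N}(f_n))$. It remains to show this torus count coincides with the excess number. One inclusion is immediate: every point of $T$ has all coordinates nonzero, so every monomial $B_j$ is nonzero at such a point, placing $T$ entirely inside $\mathbb{P}^n \setminus V(\fI)$; hence common torus zeros are excess zeros.

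The main obstacle is the reverse inclusion: showing that for generic $a_{ij}$ no excess zero lies on a proper coordinate subspace $H \subsetneq \mathbb{P}^n$. The key observation is that the restriction to $H$ is again of $B_\fI$-system type, namely $f_i|_H = \sum_{j : B_j|_H \neq 0} (a_{ij}|_H)(B_j|_H)$, which is a generic $B_{\fI|_H}$-system on $H$, and one checks that $V(\fI|_H) = V(\fI) \cap H$. Since this is a system of $n$ equations on a space of dimension at most $n-1$, I expect a dimension count to show that for generic coefficients the restricted system has no solutions on $H \setminus V(\fI|_H)$. Formalizing this step is the delicate part of the argument: it requires a Bertini-type analysis over the parameter space of $\Bsys$'s to conclude that the locus of parameter values admitting such spurious zeros on any coordinate subspace has positive codimension, so that a generic $\Bsys$ avoids them all simultaneously. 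Once this is established, the three steps combine to identify the excess number with $\operatorname{MV}(\mathcal{N}(f_1), \dots, \mathcal{N}(f_n))$.
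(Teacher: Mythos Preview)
Your argument is essentially correct, but it takes a different and more laborious route than the paper's. You work to show directly that a generic $B_{\fI}$-system has no excess zeros on any coordinate subspace, by restricting to each such $H$ and invoking a Bertini-type dimension count; you rightly flag this as the delicate step. The paper bypasses this entirely with one observation: because the $B_j$ are monomials, the coefficient map
\[
\psi:\ (\text{coefficients of the }a_{ij})\ \longrightarrow\ (\text{coefficients of the monomials of }f_1,\dots,f_n)
\]
is \emph{surjective}. Hence a general $B_{\fI}$-system is literally a general sparse system with supports $\mathcal N(f_1),\dots,\mathcal N(f_n)$, and one may simply pull back the Zariski-open locus in the target on which Bernstein's theorem already guarantees that all isolated solutions lie in the torus and are counted by the mixed volume. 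Intersecting with the open locus where the excess number attains its generic value finishes the proof.

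What each approach buys: the paper's surjectivity trick is short and pinpoints exactly where the monomial hypothesis enters (contrast Example~\ref{TCFail}, where $\psi$ is not onto and the conclusion fails). Your restriction argument is more explicit and would, once completed, give a self-contained proof not relying on the ``generic Bernstein'' package; but it requires checking that restricting a generic $B_{\fI}$-system to each coordinate subspace again yields a sufficiently generic overdetermined system, which is precisely the content you get for free from surjectivity of $\psi$.
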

\begin{proof}

The set of $\Bsys$'s of degree $\dd$ is parameterized by the coefficients of $a_{ij}$  in Definition \ref{defBsys}. 
We denote the projective space associated to the coefficients of all of the polynomials $a_{ij}$ by 
$\mathbb{P}_A$. The dimension of this projective space is one less than the number of coefficients of all the $a_{ij}$. 
We denote the projective space associated to the coefficients of the monomials of $f_1,f_2,\dots,f_n$ as 
$\mathbb{P}_C$. 
So we have a natural map $\psi$ from $\mathbb{P}_A$ to $\mathbb{P}_C$ that maps the coefficients of $a_{ij}$ to coefficients of $f_1,\dots,f_n$. 
Important for our situation, is that when $B_1,\dots,B_n$  are monomials, the image of this map $\psi$ is onto. 
Let $U_1$ denote a Zariski dense open subset of $\mathbb{P}_A$ whose complement contains coefficients that give rise to $\Bsys$'s with excess numbers less than expected. 
Let $V_2$ be a Zariski dense open set of coefficients of $f_1,\dots,f_n$ such that the mixed volume is less than expected. 
If $U_2$ be in the inverse image of $V_2$ under $\psi$,
then 
the intersection of $U_1$ and $U_2$ is again a Zariski dense open subset of $\mathbb{P}_A$. 
In particular, this means we can say a general $\Bsys$'s for which  $B_1,\dots,B_l$ are monomials has
$\Edot$ equal to the mixed volume of ${\cal N}\left(f_{1}\right),\dots,{\cal N}\left(f_{n}\right)$.
 \end{proof}

In Example \ref{monTC}, $\mathbb{P}_A$ equals $\mathbb{P}^9$ and $\mathbb{P}_C$ equals $\mathbb{P}^{17}$. By a dimension count we see that $\psi$ is not onto and explains why the mixed volume can be different from the excess number. 

The key idea of this proof was the notion of a "general $\Bsys$" 
so that we could use Bernstein's theorem to count  the solutions
we are interested in. So to determine excess numbers of monomial ideals,
we determine mixed volumes. In general, mixed volume computations
are complicated, but in some cases there is hope for an explicit formula.
The case we consider is when $\fI=\left(x_{1}^{p_{1}},\dots,x_{k}^{p_{k}}\right)$
is generated by powers of unknowns. If $f_{1},\dots,f_{n}$ define a $\Bsys$  of degree $\dd$, then we determine the excess
number $E_{\bullet}\left(\fI;d_{1},\dots,d_{n}\right)$ by calculating
the mixed volume of ${\cal N}\left(f_{1}\right),\dots,{\cal N}\left(f_{n}\right)$.
Recall that the mixed volume \cite{numBook} {[}Chapter 8.5{]} can
be calculated by determining the coefficient of $\lambda_{1}\lambda_{2}\cdots\lambda_{n}$
in the polynomial defining the volume of the scaled Minkowski sum
\[
\lambda_{1}{\cal N}\left(f_{1}\right)+\cdots+\lambda_{n}{\cal N}\left(f_{n}\right).
\]
If we let $\mathfrak{S}$ denote a simplex, then Lemma $\ref{lem: simplexBounds}$
shows that slicing $\mathfrak{S}$ by an appropriate hyperplane subdivides
$\mathfrak{S}$ into two convex polytopes $\mathfrak{S}_{0}$ and
$\mathfrak{S}_{1}$. The hyperplane can be chosen so that $\mathfrak{S}_{1}=\lambda_{1}{\cal N}\left(f_{1}\right)+\cdots+\lambda_{n}{\cal N}\left(f_{n}\right)$.
Since $\Vol\mathfrak{S}=\Vol\mathfrak{S}_{0}+\Vol\mathfrak{S}_{1}$,
we compute our desired mixed volume by determining the coefficients
of $\lambda_{1}\cdots\lambda_{n}$ in $\Vol\mathfrak{S}_{0}$ and
$\Vol\mathfrak{S}$ (Lemma $\ref{lem: finalIntegral}$). 

To elucidate our ideas we consider the following example.
\begin{example}
Let $\fI=\left(x_{1}^{p_{1}},x_{2}^{p_{2}}\right)$ be an ideal
of the ring $\mathbb{C}\left[x_{0},x_{1},x_{2},x_{3}\right]$ and
let $f_{1},f_{2},f_{3}$ be a $\Bsys$ of degree $(d_{1},d_{2},d_{3})$.
Then the Newton polytope of $f_{i}$ will be the convex hull of two
tetrahedra. Specifically, the Newton polytope of $f_{i}$ is the convex
hull of the following eight points in $\mathbb{R}^{3}$ of which six
are vertices of ${\cal N}\left(f_{i}\right)$: 

\[
\begin{array}{cccc}
\left(p_{1},0,0\right) & \left(d_{i},0,0\right) & \left(p_{1},d_{i}-p_{1},0\right) & \left(p_{1},0,d_{i}-p_{1}\right)\\
\left(0,p_{2},0\right) & \left(0,d_{i},0\right) & \left(p_{2},d_{i}-p_{2},0\right) & \left(p_{2},0,d_{i}-p_{2}\right).
\end{array}
\]
 To avoid confusion with points in $\mathbb{P}^{3}$, we describe
points in $\mathbb{R}^{3}$ as $\left(u_{1},u_{2},u_{3}\right)$ rather
than $\left(x_{1},x_{2},x_{3}\right)$. 
We will also describe the Newton
polytope ${\cal N}\left(f_{i}\right)$ by its $5$ supporting hyperplanes rather than its vertices:

$\bullet$ the $3$ coordinate hyperplanes,

$\bullet$ the hyperplane defined by $u_{1}+u_{2}+u_{3}-d_{i}$, and 

$\bullet$ the hyperplane defined by $\frac{u_{1}}{p_{1}}+\frac{u_{2}}{p_{2}}-1$.
\\
The normal vectors of the $5$ hyperplanes supporting ${\cal N}\left(f_{i}\right)$
are the same for every $i$. Indeed they are the standard unit vectors
$\epsilon_{1},\epsilon_{2},\epsilon_{3}$, the vector $\epsilon_{1}+\epsilon_{2}+\epsilon_{3}$,
and the vector $\frac{1}{p_{1}}\epsilon_{1}+\frac{1}{p_{2}}\epsilon_{2}$.
By standard polytope theory \cite{polytopeBook}{[}Proposition 7.12{]},
it follows that the scaled Minkowski sum $\lambda_{1}{\cal N}\left(f_{1}\right)+\lambda_{2}{\cal N}\left(f_{2}\right)+\lambda_{3}{\cal N}\left(f_{3}\right)$,
has the same $5$ normal vectors as those of ${\cal N}\left(f_{i}\right)$.
Indeed, the supporting hyperplanes are 

$\bullet$ the $3$ coordinate hyperplanes,

$\bullet$ the hyperplane defined by $u_{1}+u_{2}+u_{3}-\left(\lambda_{1}d_{1}+\lambda_{2}d_{2}+\lambda_{3}d_{3}\right)$,
and 

$\bullet$ the hyperplane defined by $\frac{u_{1}}{p_{1}}+\frac{u_{2}}{p_{2}}-\left(\lambda_{1}+\lambda_{2}\right)$.\\
Now note that four of the five hyperplanes are defining facets of
a simplex whose volume is $\frac{1}{3!}\left(\lambda_{1}d_{1}+\lambda_{2}d_{2}+\lambda_{3}d_{3}\right)^{3}$.
The fifth hyperplane subdivides the simplex as seen in Figure $1$.
By subtracting the volume of the white figure from the volume of the
simplex, we attain the mixed volume by considering the coefficients
of $\lambda_{1}\lambda_{2}\lambda_{3}$ in the difference. In this
example, we would find the excess number satisfies $E_{\bullet}\left(x_{1}^{p_{1}},x_{2}^{p_{2}};d_{1},d_{2},d_{3}\right)+p_{1}p_{2}\left(d_{1}+d_{2}+d_{3}-p_{1}-p_{2}\right)=d_{1}d_{2}d_{3}$.
\end{example}
\begin{figure}[h]
\caption{\protect\includegraphics[scale=0.24]{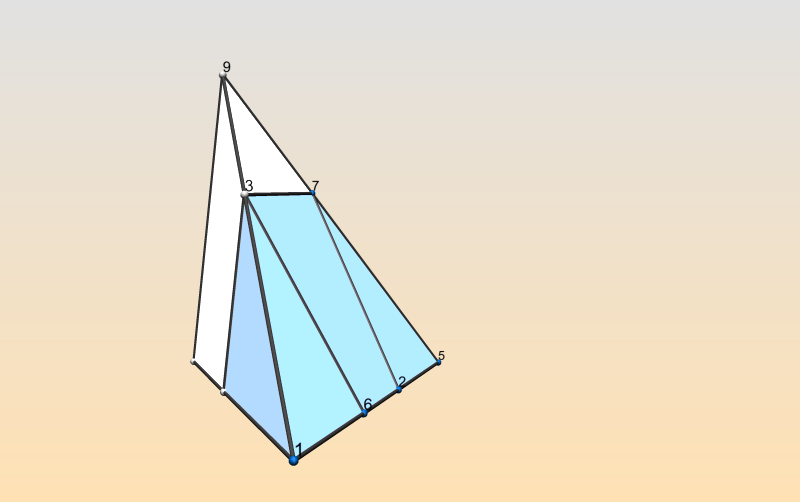}}

\end{figure}

We now precisely define the polytopes $\mathfrak{S},\mathfrak{S}_{0},\mathfrak{S}_{1}$.
Let $D=\lambda_{1}d_{1}+\cdots+\lambda_{n}d_{n}$ and $\Lambda=\lambda_{1}+\cdots+\lambda_{n}$.
Then $\mathfrak{S}$ is defined as the $n$-simplex whose $n+1$  vertices
are the origin and $D\epsilon_{i}$. Moreover, the volume of $\mathfrak{S}$
equals $D^{n}/n!$ which has a term $d_{1}d_{2}\cdots d_{n}\lambda_{1}\lambda_{2}\cdots\lambda_{n}$
when expanded out. If we define the hyperplane $h_{p}$ by $\frac{u_{1}}{p_{1}}+\cdots+\frac{u_{k}}{p_{k}}-\Lambda$,
then $h_{p}$ slices $\mathfrak{S}$ into two convex polytopes $\mathfrak{S}_{0}$
containing the origin, and $\mathfrak{S}_{1}$. We will prove that
$\mathfrak{S}_{1}$ is the scaled Minkowski sum $\lambda_{1}{\cal N}\left(f_{1}\right)+\cdots+\lambda_{n}{\cal N}\left(f_{n}\right)$.

\begin{lem}
\label{lem: simplexBounds} Let $f_{1},\dots,f_{n}$ be a $\Bsys$
of degree $\dd$. If $\fI=\left(x_{1}^{p_{1}},\dots,x_{k}^{p_{k}}\right)$,
then $\mathfrak{S}_{1}=\lambda_{1}{\cal N}\left(f_{1}\right)+\cdots+\lambda_{n}{\cal N}\left(f_{n}\right)$. \end{lem}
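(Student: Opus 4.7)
The plan is to realize both $\mathfrak{S}_1$ and the scaled Minkowski sum $\lambda_1 \mathcal{N}(f_1) + \cdots + \lambda_n \mathcal{N}(f_n)$ as intersections of half-spaces with a common list of outer normals, so that the identity reduces to matching offsets via the standard support-function calculus for Minkowski sums.

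The key technical step is an explicit facet description of $\mathcal{N}(f_i)$. For each $i$, the polynomial $f_i$ in a general $\Bsys$ is a $\mathbb{C}$-linear combination of those degree-$d_i$ monomials in $\mathbb{C}[x_0,\dots,x_n]$ that are divisible by some $x_j^{p_j}$ with $j \leq k$. Dehomogenizing in coordinates $(u_1,\dots,u_n)$ for the exponents of $x_1,\dots,x_n$, I would prove
\[
\mathcal{N}(f_i) \;=\; \Bigl\{\, u \in \mathbb{R}^n_{\geq 0} \;:\; u_1 + \cdots + u_n \leq d_i,\; \tfrac{u_1}{p_1} + \cdots + \tfrac{u_k}{p_k} \geq 1 \,\Bigr\}.
\]
The inclusion $\subseteq$ is immediate: every support point of $f_i$ satisfies all three families of inequalities, hence so does the convex hull. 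For $\supseteq$, I would enumerate the vertices of the right-hand polytope -- typically $p_j \epsilon_j$ and $d_i \epsilon_j$ for $j \leq k$, together with mixed vertices such as $p_j \epsilon_j + (d_i - p_j)\epsilon_{j'}$ -- and verify in each case that the vertex is an actual lattice point in the support of $f_i$. The hypothesis $\min\dd \geq \max(p_1,\dots,p_l)$ from Definition \ref{defBsys} is exactly what guarantees these vertices sit in the nonnegative orthant.

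Given this description, the list of outer normals of $\mathcal{N}(f_i)$ -- namely $-\epsilon_1,\dots,-\epsilon_n$, the all-ones vector $\epsilon_1 + \cdots + \epsilon_n$, and $-\sum_{j=1}^k \frac{1}{p_j}\epsilon_j$ -- is independent of $i$; only the offsets $d_i$ and $1$ depend on $i$. Invoking \cite{polytopeBook}{[}Proposition 7.12{]}, equivalently the additivity of support functions under Minkowski sum together with their homogeneity under scaling, the scaled sum $\lambda_1 \mathcal{N}(f_1) + \cdots + \lambda_n \mathcal{N}(f_n)$ is cut out by the same normals with offsets $0$ for the coordinate hyperplanes, $D = \sum_i \lambda_i d_i$ for the all-ones normal, and $\Lambda = \sum_i \lambda_i$ for the last one. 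This coincides verbatim with the defining inequalities of $\mathfrak{S}_1$. The principal obstacle is the facet description of $\mathcal{N}(f_i)$: one must argue that no additional facets beyond the three families listed are needed, which amounts to checking that every vertex of the candidate polytope lies in the support of a generic $f_i$.
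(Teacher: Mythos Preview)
Your proposal is correct and follows essentially the same route as the paper: identify the $n+2$ supporting half-spaces of each $\mathcal{N}(f_i)$, observe that the outer normals are common to all $i$, and apply \cite{polytopeBook}{[}Proposition 7.12{]} so that the Minkowski sum has the same normals with offsets summed to $D$ and $\Lambda$, matching the definition of $\mathfrak{S}_1$. If anything, you are more careful than the paper in flagging the verification of the facet description of $\mathcal{N}(f_i)$ (the vertex enumeration for the $\supseteq$ inclusion), which the paper simply asserts.
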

\begin{proof}
We will show that the polytopes $\mathfrak{S}_{1}$ and $\lambda_{1}{\cal N}\left(f_{1}\right)+\cdots+\lambda_{n}{\cal N}\left(f_{n}\right)$
have the same $n+2$  supporting hyperplanes so that they must be
equal. Specifically, we show that the supporting hyperplanes are 

$\bullet$ the $n$ coordinate hyperplanes,

$\bullet$ the hyperplane $h_{d}$ defined by $u_{1}+\cdots+u_{n}-D$,
and 

$\bullet$ the hyperplane $h_{p}$ defined by $\frac{u_{1}}{p_{1}}+\cdots+\frac{u_{k}}{p_{k}}-\Lambda$.\\
The first $n+1$  hyperplanes support the simplex $\mathfrak{S}$.
The last hyperplane $h_{p}$ slices $\mathfrak{S}$ into two polytopes
one containing the origin and a second which is by definition the
polytope $\mathfrak{S}_{1}$. Now because $\lambda_{i}{\cal N}\left(f_{i}\right)$
has $n+2$ supporting hyperplanes consisting of 

$\bullet$ the $n$ coordinate hyperplanes,

$\bullet$ the hyperplane defined by $u_{1}+\cdots+u_{n}-\lambda_{i}d_{i}$,
and 

$\bullet$ the hyperplane defined by $\frac{u_{1}}{p_{1}}+\cdots+\frac{u_{k}}{p_{k}}-\lambda_{i}$
.\\
By standard polytope theory \cite{polytopeBook}{[}Proposition 7.12{]},
it follows $\lambda_{1}{\cal N}\left(f_{1}\right)+\cdots+\lambda_{n}{\cal N}\left(f_{n}\right)$
has the desired supporting hyperplanes. 
\end{proof}
With Lemma $\ref{lem: simplexBounds}$, we are able to calculate the
mixed volume by determining the coefficient of $\lambda_{1}\lambda_{2}\cdots\lambda_{n}$
in an integral as seen in Lemma $\ref{lem: finalIntegral}$. 
\begin{lem}
\label{lem: finalIntegral}With the previous notation, $\Vol\left(\mathfrak{S}_{0}\right)$
is a polynomial whose coefficient of $\lambda_{1}\lambda_{2}\cdots\lambda_{n}$
equals \textup{$p_{1}\cdots p_{k}\sum_{\delta=0}^{n-k}\left(\left(-1\right)^{\delta}{\cal D}_{n-k-\delta}\cdot{\cal P}_{\delta}\right)$. }\end{lem}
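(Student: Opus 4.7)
The region $\mathfrak{S}_{0}$ is cut out of the positive orthant by the two half-space conditions $u_{1}+\cdots+u_{n}\leq D$ and $u_{1}/p_{1}+\cdots+u_{k}/p_{k}\leq \Lambda$. The plan is to compute $\Vol(\mathfrak{S}_{0})$ as an iterated integral, first integrating out the ``free'' coordinates $u_{k+1},\ldots,u_{n}$ (which appear only in the constraint $\sum u_{i}\leq D$) and then expanding and extracting the coefficient of $\lambda_{1}\cdots\lambda_{n}$. For fixed $(u_{1},\ldots,u_{k})$ in the region $R=\{u_{i}\geq 0,\ \sum_{i=1}^{k}u_{i}/p_{i}\leq\Lambda\}$, the slice in the remaining coordinates is the $(n-k)$-simplex of volume $(D-u_{1}-\cdots-u_{k})^{n-k}/(n-k)!$, so $\Vol(\mathfrak{S}_{0})$ reduces to a $k$-dimensional integral over $R$.

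The next step is to rescale. Substituting $u_{i}=p_{i}w_{i}$ on $R$ contributes a Jacobian factor $p_{1}\cdots p_{k}$ and converts $R$ into the scaled simplex $\Lambda\cdot\Delta_{k}=\{w_{i}\geq 0,\ \sum w_{i}\leq\Lambda\}$, producing
\[
\Vol(\mathfrak{S}_{0})=\frac{p_{1}\cdots p_{k}}{(n-k)!}\int_{\Lambda\cdot\Delta_{k}}\left(D-\sum_{j=1}^{k}p_{j}w_{j}\right)^{n-k}dw.
\]
Expanding $(D-\sum_{j}p_{j}w_{j})^{n-k}$ by the binomial theorem and then expanding each factor $(\sum_{j}p_{j}w_{j})^{\delta}$ multinomially allows each piece to be computed by the classical Dirichlet integral
\[
\int_{\Delta_{k}}w_{1}^{\alpha_{1}}\cdots w_{k}^{\alpha_{k}}\,dw=\frac{\alpha_{1}!\cdots\alpha_{k}!}{(|\alpha|+k)!}.
\]
Since $\sum_{|\alpha|=\delta}p_{1}^{\alpha_{1}}\cdots p_{k}^{\alpha_{k}}$ is by definition the complete homogeneous symmetric function $\mathcal{P}_{\delta}$, the output of this step is an explicit polynomial in $D$ and $\Lambda$ whose coefficients involve $p_{1}\cdots p_{k}$ and the symmetric functions $\mathcal{P}_{\delta}$.

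Finally, one extracts the coefficient of $\lambda_{1}\cdots\lambda_{n}$ from each bi-homogeneous piece $D^{n-k-\delta}\Lambda^{k+\delta}$. A direct count, obtained by choosing which subset of the $\lambda_{i}$'s comes from $D^{n-k-\delta}$ and which from $\Lambda^{k+\delta}$, identifies this squarefree coefficient as $(n-k-\delta)!(k+\delta)!\,\mathcal{D}_{n-k-\delta}$. The three independent factorial factors -- binomial, multinomial/Dirichlet, and the squarefree count -- then telescope, and after cancellation the formula $p_{1}\cdots p_{k}\sum_{\delta=0}^{n-k}(-1)^{\delta}\mathcal{D}_{n-k-\delta}\mathcal{P}_{\delta}$ drops out. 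The main obstacle is purely bookkeeping: keeping the three layers of combinatorial factors straight until they cancel. The structural content is small and consists of the two identifications $\sum_{|\alpha|=\delta}p^{\alpha}=\mathcal{P}_{\delta}$ and (coefficient of $\lambda_{1}\cdots\lambda_{n}$ in $D^{m}\Lambda^{n-m}$)$=m!(n-m)!\,\mathcal{D}_{m}$.
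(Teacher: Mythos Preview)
Your proposal is correct and follows essentially the same route as the paper: integrate out $u_{k+1},\ldots,u_{n}$ to reduce to a $k$-dimensional integral of $(D-\sum_{i\leq k}u_{i})^{n-k}/(n-k)!$ over the simplex $\triangle$, expand by the binomial theorem, evaluate the resulting simplex integrals, and then read off the squarefree coefficient in the $\lambda_{i}$. The only cosmetic difference is that the paper quotes \cite{integrateSimplex} for $\int_{\triangle}(x_{1}+\cdots+x_{k})^{\delta}\,d_{\triangle}=\Lambda^{k+\delta}p_{1}\cdots p_{k}\,\delta!/(\delta+k)!\,\mathcal{P}_{\delta}$, whereas you obtain the same formula in a self-contained way by first rescaling $u_{i}=p_{i}w_{i}$ (pulling out the Jacobian $p_{1}\cdots p_{k}$) and then applying the Dirichlet integral together with the multinomial expansion; the factorials from the multinomial coefficients cancel against those in the Dirichlet formula to leave exactly $\sum_{|\alpha|=\delta}p^{\alpha}=\mathcal{P}_{\delta}$.
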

\begin{proof}
By Lemma $\ref{lem: simplexBounds}$, it follows that the volume of
$\mathfrak{S}_{0}$ equals 

\[
\Vol\left(\mathfrak{S}_{0}\right)=\int\left[\int\cdots\int d_{x_{n}}d_{x_{n-1}}\cdots d_{x_{k+1}}\right]d_{\triangle}
\]
with the bounds of each integral inside the brackets with respect
to $d_{x_{i}}$ being $\left(x_{i}=0\right)\to\left(x_{i}=D-x_{i-1}-\cdots-x_{1}\right)$
and $\triangle$ denotes the simplex in $k$-dimensional space with
$k+1$ vertices of $\epsilon_{0},\Lambda p_{1}\cdot\epsilon_{1},\dots,\Lambda p_{k}\cdot\epsilon_{k}$. 

By using the calculus fact $\left[\int\cdots\int d_{x_{n}}d_{x_{n-1}}\cdots d_{x_{k+1}}\right]=\frac{1}{r!}\left(D-x_{k}-\cdots-x_{1}\right)^{r}$
and the binomial theorem, we have

\[
\begin{array}{lll}
\Vol\left(\mathfrak{S}_{0}\right) & = & \frac{1}{r!}\int\left(D-x_{k}-\cdots-x_{1}\right)^{r}d_{\triangle}\\
 & = & \frac{1}{r!}\sum_{\delta=0}^{r}\left(\left(-1\right)^{\delta}\binom{r}{\delta}D^{r-\delta}\int\left(x_{k}+\cdots+x_{1}\right)^{\delta}d_{\triangle}\right)\\
 & = & p_{1}\cdots p_{k}\sum_{\delta=0}^{r}\left(\left(-1\right)^{\delta}\frac{D^{n-k-\delta}}{\left(n-k-\delta\right)!}\frac{\Lambda^{k+\delta}}{\left(\delta+k\right)!}{\cal P}_{\delta}\right)
\end{array}
\]
with $r=n-k$. It is known how to integrate a linear form raised to
some power over the simplex. So to get the last equality, we use \cite{integrateSimplex}
{[}Remark $9${]}, that says $\int\left(x_{k}+\cdots +x_{1}\right)^{\delta}d_{\triangle}=\Lambda^{k+\delta}p_{1}\cdots p_{k}\frac{\delta!}{\left(\delta+k\right)!}{\cal P}_{\delta}$.%

Now, note that $L_{m}\left(\lambda_{1},\dots,\lambda_{n}\right):=m!\left(\text{the monomials in \ensuremath{\lambda_{1},\dots,\lambda_{n}}\,of degree }m\right)$
is congruent to $\Lambda^{m}$ modulo $\lambda_{1}^{2},\dots,\lambda_{n}^{2}$.
Similarly, also note that $D^{m}$ is congruent to $L_{m}\left(\lambda_{1}d_{1},\dots,\lambda_{n}d_{n}\right)$
modulo $\lambda_{1}^{2},\dots,\lambda_{n}^{2}$. So we have 
\[
\begin{array}{lll}
\Vol\left(\mathfrak{S}_{0}\right) & \equiv & p_{1}\cdots p_{k}\sum_{\delta=0}^{n-k}\left(\left(-1\right)^{\delta}L_{n-k-\delta}\left(\lambda_{1}d_{1},\dots,\lambda_{n}d_{n}\right)\cdot L_{k+\delta}\left(\lambda_{1},\dots,\lambda_{n}\right){\cal P}_{\delta}\right)\\
 & \equiv & \lambda_{1}\cdots\lambda_{n}\cdot p_{1}\cdots p_{k}\sum_{\delta=0}^{n-k}\left(\left(-1\right)^{\delta}{\cal D}_{n-k-\delta}{\cal P}_{\delta}\right).
\end{array}
\]
The last congruence is shown by an easy combinatorial argument. 
\end{proof}
\begin{rem}
We remark that the number $E_{\bullet}\left(\fI;d_{1},\dots,d_{n}\right)$
depends only on the Newton polytopes ${\cal N}\left(f_{1}\right),\dots,{\cal N}\left(f_{n}\right)$.
For example, consider the ideals $\fI_{1}=\left(x^{3},y^{3}\right)$,
$\fI_{2}=\left(x^{3},y^{3},x^{2}y,xy^{2}\right)$, $\fI_{3}=\left(x^{3},y^{3},x^{2}y^{2}\right)$
in the ring $\mathbb{C}\left[w,x,y,z\right]$. All three of these
ideals have the same excess numbers when every $d_{i}$ is greater
than $4$ because the Newton polytopes of the defining polynomials of a $\Bsys$ 
 are the same for $i=1,2,3$. In particular, $E_{\bullet}\left(\fI_{i};5,5,5\right)=44$
for $i=1,2,3$. But if we consider the ideal ${\cal J}=\left(x^{3},y^{3}\right)+\left(xy\right)$,
we find the Newton polytopes of a $\BsysJ$  are different
from those Newton polytopes of  a $\Bsys$. In
particular, one can compute the excess number $E_{\bullet}\left({\cal J};5,5,5\right)$
to be $65$.
\end{rem}

\section{Numerical Algebraic Geometry Algorithms }

We have given a combinatorial description of excess numbers of monomial
ideals in the first part of the paper and used this idea to give an
explicit formula in Theorem $\ref{thm:mainResult}$. In the last part
of this paper, we give algorithms that use homotopy continuation,
an idea from numerical algebraic geometry, to compute excess numbers
of any ideal $\fI\subset\mathbb{C}\left[x_{0},\dots,x_{n}\right]$. 
As mentioned in the introduction there are other ways to compute excess numbers with Segre classes. In addition, one can use off-the-shelf computer algebra software like \texttt{Macaulay2}
to compute excess numbers by saturating the ideal of a $\Bsys$ by $\fI$. 
Also, the examples we present here can also be worked out by hand using Fulton-MacPherson intersection theory.


Our algorithms will construct two homotopies, called $\hup$ and $\hit$,
that take the isolated solutions of a $\BsysPrime$
$f_{1}',\dots,f_{n}'$ as start points and tracks them to solutions
of $f_{1},\dots,f_{n}$ giving bounds on $E_{\bullet}\left(\fI;d_{1},\dots,d_{n}\right)$.
In the first algorithm, the monomial ideal $\fI'$ is constructed
so that $E_{\bullet}\left(\fI';d_{1},\dots,d_{n}\right)\geq E_{\bullet}\left(\fI;d_{1},\dots,d_{n}\right)$.
By doing a numerical membership test \cite{numBook} {[}Chapter $15${]},
we will determine $E_{\bullet}\left(\fI;d_{1},\dots,d_{n}\right)$
and isolated solutions of ${\bf V}\left(f_{1},\dots,f_{n}\right)\backslash{\bf V}\left(\fI\right)$
explicitly. In the second algorithm, the monomial ideal $\fI'$
is constructed to give lower bounds of $E_{\bullet}\left(\fI;d_{1},\dots,d_{n}\right)$
instead. But by iterating the second algorithm, we have a probabilistic
way to make this bound sharp  and compute all isolated solutions of
${\bf V}\left(f_{1},\dots,f_{n}\right)\backslash{\bf V}\left(\fI\right)$
explicitly. The $\hup$-homotopy gets its name because it produces
an $upp$er bound of $E_{\bullet}\left(\fI;d_{1},\dots,d_{n}\right)$
prior to a membership test. The $\hit$-homotopy gets its name because
several $ite$rations can produce sharp lower bounds of $E_{\bullet}\left(\fI;d_{1},\dots,d_{n}\right)$
after a membership test.

\subsection{Algorithm one and the $\hup$-homotopy }

We now give a definition of the $\hup$-homotopy and prove that it
does indeed provide an upper bound of $E_{\bullet}\left(\fI;d_{1},\dots,d_{n}\right)$
prior to a membership test. 
\begin{defn}
Let $B_{1},\dots B_{l}\in\mathbb{C}\left[x_{0},\dots,x_{n}\right]$
be forms such that $B_{j}=\sum_{k}A_{j,k}$ with $A_{j,k}$ being
a monomial multiplied by a scalar. To ease notation, let $\overrightarrow{A_{j}}=\left[A_{j,1},\dots,A_{j,k_{j}}\right]$
be a row vector whose entries sum to $B_j$, $\overrightarrow{\alpha_{i,j}}$ be a row vector
of $k_{j}$ different general forms, and $\overrightarrow{\beta_{i,j}}$
be a row vector of a general form that is repeated $k_{j}$ times. Define
the $\hup$-homotopy as $\hup\left(t;d_{1},\dots,d_{n}\right):=$
\[
\left(t\left[\begin{array}{cccc}
\overrightarrow{\alpha_{1,1}} & \overrightarrow{\alpha_{1,2}} & \cdots & \overrightarrow{\alpha_{1,l}}\\
\vdots &  &  & \vdots\\
\overrightarrow{\alpha_{n,1}} & \overrightarrow{\alpha_{n,2}} & \cdots & \overrightarrow{\alpha_{n,l}}
\end{array}\right]+\left(1-t\right)\left[\begin{array}{cccc}
\overrightarrow{\beta_{1,1}} & \overrightarrow{\beta_{1,2}} & \cdots & \overrightarrow{\beta_{1,l}}\\
\vdots &  &  & \vdots\\
\overrightarrow{\beta_{n,1}} & \overrightarrow{\beta_{n,2}} & \cdots & \overrightarrow{\beta_{n,l}}
\end{array}\right]\right)\left[\begin{array}{c}
\overrightarrow{A_{1}}\mbox{}^{T}\\
\overrightarrow{A_{2}}\mbox{}^{T}\\
\vdots\\
\overrightarrow{A_{l}}\mbox{}^{T}
\end{array}\right],
\]
with the degrees of the general forms of $\overrightarrow{\alpha_{i,j}}$
and $\overrightarrow{\beta_{i,j}}$ chosen so that $\hup\left(t,d_{1},\dots,d_{n}\right)$
is a system of $n$ forms of degrees $d_{1},\dots,d_{n}$. We denote the start points of $\hup$$\left(t;d_{1},\dots,d_{n}\right)$
as $S_{\hup}$ and take them to be the isolated solutions of $ $$\hup\left(1;d_{1},\dots,d_{n}\right)$.
Denote the end points of $\hup$$\left(t;d_{1},\dots,d_{n}\right)$
as $T_{\hup}$.

With this definition, we have when $t=1$ that $\hup\left(t;d_{1},\dots,d_{n}\right)$
is a general $\BsysPrime$ $f_{1}',\dots,f_{n}'$
of degrees $\dd$. On the other hand, when $t=0$ we
have $\hup\left(t;d_{1},\dots,d_{n}\right)$ is a $\Bsys$ of degree $\dd$. 
By the fundamental theorem
of parameter continuation of isolated roots \cite{numBook} {[}Theorem
$7.1.6${]} it follows that $T_{\hup}$ contains all isolated solutions
of $f_{1},\dots,f_{n}$. In particular, this proves Theorem $\ref{homotopyUpBound}$
because $\#S_{\hup}\geq\#T_{\hup}$. \end{defn}
\begin{thm}
\label{homotopyUpBound}Let $\fI\subset\mathbb{C}\left[x_{0},\dots,x_{n}\right]$
be generated by the forms $B_{1},\dots B_{l}$ such that $B_{j}=\sum_{k}A_{j,k}$
with $A_{j,k}$ being a monomial multiplied by a scalar. If we let $\fI'$ be generated
by $A_{j,k}$, then 

\[
E_{\bullet}\left(\fI';d_{1},\dots,d_{n}\right)\geq E_{\bullet}\left(\fI;d_{1},\dots,d_{n}\right).
\]
Moreover, the parameter homotopy $\hup\left(t;d_{1},\dots,d_{n}\right)$
has endpoints $T_{{\bf \hup}}$ containing all isolated solutions
of $f_{1}',\dots,f_{n}'$. 
\end{thm}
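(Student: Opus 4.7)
The plan is to interpret the $\hup$-homotopy as an instance of parameter continuation between two generic square systems: a $\BsysPrime$ at $t=1$ and a $\Bsys$ at $t=0$. Once this identification is in hand, the desired inequality will follow from comparing the cardinalities of the start-point set $S_\hup$ and the endpoint set $T_\hup$, together with the fundamental theorem of parameter continuation of isolated roots.

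First I would read off directly from the matrix formula defining $\hup$ what the two endpoint specializations are. At $t=1$ the coefficient matrix is $\left[\overrightarrow{\alpha_{i,j}}\right]$, so the $i$-th coordinate of $\hup(1;d_1,\dots,d_n)$ is a generic linear combination of the monomial summands $A_{j,k}$ that generate $\fI'$; this identifies $\hup(1;d_1,\dots,d_n)$ as a generic $\BsysPrime$. At $t=0$, each $\overrightarrow{\beta_{i,j}}$ is a single form $\beta_{i,j}$ repeated $k_j$ times, and its pairing with $\overrightarrow{A_j}^T$ collapses to $\beta_{i,j}\sum_k A_{j,k}=\beta_{i,j}B_j$; hence $\hup(0;d_1,\dots,d_n)$ is a generic $\Bsys$ for $\fI$, namely the given $f_1,\dots,f_n$.

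Next I would count the start points. Because $\fI'$ is a monomial ideal, ${\bf V}(\fI')$ is a union of positive-dimensional coordinate linear spaces, so every isolated zero of $\hup(1;d_1,\dots,d_n)$ automatically lies in the excess locus ${\bf V}(f_1',\dots,f_n')\setminus{\bf V}(\fI')$. By the genericity of the $\alpha_{i,j}$ together with Lemma \ref{thm:translateProblem}, there are exactly $E_\bullet(\fI';d_1,\dots,d_n)$ such isolated solutions, giving $\#S_\hup=E_\bullet(\fI';d_1,\dots,d_n)$.

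Finally I would invoke the fundamental theorem of parameter continuation to conclude that $T_\hup$ contains every isolated solution of $\hup(0;d_1,\dots,d_n)=(f_1,\dots,f_n)$; in particular it contains every point of ${\bf V}(f_1,\dots,f_n)\setminus{\bf V}(\fI)$, so $\#T_\hup\geq E_\bullet(\fI;d_1,\dots,d_n)$. Combined with the elementary bound $\#S_\hup\geq\#T_\hup$ (paths may coalesce or run to infinity but cannot multiply), this yields the asserted inequality. The main obstacle is verifying the genericity hypothesis of the parameter continuation theorem: one must check that the $(\alpha,\beta)$ parameters lie in a Zariski open locus over which the isolated-root count of $\hup(t;\cdot)$ behaves well, and in particular that the specialization at $t=1$ is itself a generic $\BsysPrime$, so that the count from Lemma \ref{thm:translateProblem} is actually realized by $S_\hup$ rather than some smaller number.
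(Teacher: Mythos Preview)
Your proposal is correct and follows essentially the same argument as the paper: identify $\hup(1;\cdot)$ as a general $\BsysPrime$ and $\hup(0;\cdot)$ as a $\Bsys$, invoke the fundamental theorem of parameter continuation of isolated roots to conclude that $T_{\hup}$ contains all isolated solutions of the target system, and finish via $\#S_{\hup}\geq\#T_{\hup}$. Your additional remarks (the explicit collapse $\overrightarrow{\beta_{i,j}}\cdot\overrightarrow{A_j}^{T}=\beta_{i,j}B_j$, the identification $\#S_{\hup}=E_\bullet(\fI';d_1,\dots,d_n)$ via Lemma~\ref{thm:translateProblem}, and the caveat about verifying the genericity hypothesis) simply make explicit details that the paper leaves implicit.
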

Now that we have the theorem, we present our algorithm.
\\ 

\framebox{\begin{minipage}[t]{.9\columnwidth}%
${\bf Input}$: Natural numbers $d_{1},\dots,d_{n}$ and generators
$B_{1},\dots,B_{l}$ of an ideal $\fI$ in $\mathbb{C}\left[x_{0},\dots,x_{n}\right]$
such that $B_{j}=\sum_{k}A_{j,k}$. 

${\bf Output:}$ The excess number $E_{\bullet}\left(\fI;d_{1},\dots,d_{n}\right)$. 

$\emph{Step 1}:$ Construct the the $\hup$-homotopy ${\bf \hup}\left(t;d_{1},\dots,d_{n}\right)$.

$\emph{Step 2:}$ Solve the start system $\hup\left(1;d_{1},\dots,d_{n}\right):=\left[f_{1}',\dots,f_{n}'\right]^{T}$
and compute $S_{\hup}$.

$\emph{Step 3:}$ Use the $\hup$-homotopy to determine $T_{\hup}$
and an upper bound of $E_{\bullet}\left(\fI;d_{1},\dots,d_{n}\right)$. 

$\emph{Step 4:}$ Use a numerical membership test to determine $E_{\bullet}\left(\fI;d_{1},\dots,d_{n}\right)$
and the isolated solutions of $\hup\left(0;d_{1},\dots,d_{n}\right):=f_{1},\dots,f_{n}$. %
\end{minipage}}
\\ 

For this algorithm, we assume in Step $2$ that the excess intersection
of a monomial ideal can be determined.  We now give an example where $\fI$ defines the twisted
cubic. 
\begin{example}
\label{exAlg1} Let the ideal $\fI\subset\mathbb{C}\left[x,y,z,w\right]$
be generated by the forms 
\[
B_{1}=z^{2}-yw,\: B_{2}=yz-xw,\: B_{3}=y^{2}-xz,
\]
and suppose we want to calculate $E_{\bullet}\left(\fI;3,3,3\right)$.
To run the first algorithm, we input $d_{1}=d_{2}=d_{3}=3$ and $B_{1},B_{2},B_{3}$.
In Step $1$, we determine $\hup\left(t;d_{1},\dots,d_{n}\right)=$
\[
\left(t\left[\begin{array}{cccc}
a{}_{11} & a{}_{12} & \cdots & a{}_{16}\\
a{}_{21} & a{}_{22} & \cdots & a{}_{26}\\
a{}_{31} & a{}_{32} & \cdots & a{}_{36}
\end{array}\right]+\left(1-t\right)\left[\begin{array}{cccccc}
b_{11} & b_{11} & b_{12} & b_{12} & b_{13} & b_{13}\\
b_{21} & b_{21} & b_{22} & b_{22} & b_{23} & b_{23}\\
b_{31} & b_{31} & b_{32} & b_{32} & b_{33} & b_{33}
\end{array}\right]\right)\left[\begin{array}{c}
z^{2}\\
-yw\\
yz\\
-xw\\
y^{2}\\
-xz
\end{array}\right].
\]
 The forms $a{}_{ij}$ and $b_{ij}$ are general linear forms of $\mathbb{C}\left[x,y,z,w\right]$.
Once we have solved the system $\hup\left(1;3,3,3\right)$ in Step
$2$, we path track in Step $3$ to calculate $T_{\hup}$ giving an
upper bound $\#T_{\hup}$ of $E_{\bullet}\left(\fI;3,3,3\right)$.
In Step $4$, we use a numerical membership test \cite{numBook} to
conclude $E_{\bullet}\left(\fI;3,3,3\right)=10$. Indeed, if 

\[
\left[\begin{array}{c}
b_{11}\\
b_{12}\\
\vdots\\
b_{33}
\end{array}\right]=\left[\begin{array}{cccccccc}
1/2 & 1 & 4/5 & 1/3 & 1/5 & 7/8 & 13 & 1/3\\
3 & 7 & 9/7 & 1/8 & 4 & 1/6 & 5 & -1\\
-5 & 4 & 7/8 & 8/9 & 3 & 1/15 & 1/6 & -8\\
-1/4 & 2 & 1/3 & -1 & -1 & -2 & 7/9 & 1/4
\end{array}\right]^{T}\left[\begin{array}{c}
x\\
y\\
z\\
w
\end{array}\right],
\]
$ $then we find the ten excess points are $s_{1},s_{2},s_{3},s_{4},s_{5},s_{6},s_{7},s_{8}=\bar{s}_{5},s_{9}=\bar{s}_{6},s_{10}=\bar{s}_{7}$:

\[
\begin{array}{ccccc}
 & s_{1} & s_{2} & s_{3} & s_{4}\\
x & -6.1999 & -0.2081 & -1.0024 & -0.1530\\
y & 5.9766 & 0.5979 & 3.1208 & 0.3771\\
z & -2.3702 & -2.1386 & -5.1077 & -0.6183\\
w & 1 & 1 & 1 & 1
\end{array},
\]
 
\[
\begin{array}{cccc}
 & s_{5} & s_{6} & s_{7}\\
x & -.6493+1.4057i & 0.4713-0.0461i & 2.9076+0.0384i\\
y & .4134-1.4061i & 0.2603-0.5271i & -1.0341+1.7553i\\
z & -1.1267+0.3173i & -0.9278+0.1923i & -0.7082-1.2392i\\
w & 1 & 1 & 1
\end{array}.
\]
 
\end{example}

\subsection{Algorithm two and the $\hit$-homotopy}

The second algorithm we present is probabilistic. The algorithm uses the $\hit$-homotopy
to compute a lower bound of excess numbers. By iterating this algorithm,
the lower bounds can become sharp. 
\begin{defn}
Let $B_{1},\dots B_{l}\in\mathbb{C}\left[x_{0},\dots,x_{n}\right]$
be forms that generate $\fI$ and $A_{1},\dots,A_{l}$ be monomials
that generate $\fI'$ such that $\deg B_{j}=\deg A_{j}$. The
$\hit$-homotopy is defined as $\hit\left(t;d_{1},\dots,d_{n}\right):=$
\[
\left[\begin{array}{cccc}
a_{11} & a_{12} & \cdots & a_{1l}\\
\vdots\\
a_{n1} & a_{n2} & \cdots & a_{nl}
\end{array}\right]\left(t\left[\begin{array}{c}
A_{1}\\
\vdots\\
A_{l}
\end{array}\right]+\gamma\left(1-t\right)\left[\begin{array}{c}
B_{1}\\
\vdots\\
B_{l}
\end{array}\right]\right),
\]
 with the degrees of the general forms $a_{ij}$ equal to $\deg f_{i}-\deg A_{j}$.
We denote the start points of $\hit$ as $S_{\itt}$ and take them
to be the isolated solutions of $ $$\hit\left(1;d_{1},\dots,d_{n}\right)$
and denote the end points of $\hit\left(t;d_{1},\dots,d_{n}\right)$
as $T_{\itt}$.

With this definition, we have when $t$ equals $1$ that \break$\hit\left(t;d_{1},\dots,d_{n}\right)$
is a $\BsysPrime$ $f_{1}',\dots,f_{n}'$
of degrees $\dd$. On the other hand, when $t=0$ we
have $\hit\left(t;d_{1},\dots,d_{n}\right)$ is a system of $n$ $\fI$-general
forms of degrees $d_{1},\dots,d_{n}$. While the $\hit$-homotopy
is easy to set up,  the fundamental theorem of parameter continuation
of isolated roots \cite{numBook} {[}Theorem $7.1.6${]} 
cannot be applied. 
So $T_{\hit}$
does not necessarily contain all isolated solutions of $f_{1},\dots,f_{n}$.
However, after doing a membership test, we can determine some points
in $T_{\hit}$ are isolated solutions of $f_{1},\dots,f_{n}$. So
what we have is a lower bound on $E_{\bullet}\left(\fI;d_{1},\dots,d_{n}\right)$.
But, by iterating this homotopy, we can find more isolated solutions
and give a better lower bound. 
\end{defn}
\framebox{\begin{minipage}[t]{.9\columnwidth}%
${\bf Input}$: Natural numbers $d_{1},\dots,d_{n}$, generators $B_{1},\dots,B_{l}$
of an ideal $\fI\subset\mathbb{C}\left[x_{0},\dots,x_{n}\right]$,
monomials $A_{1},A_{2},\dots,A_{l}$ such that $\deg A_{j}=\deg B_{j}$,
and a (possibly empty) set $W$ of isolated solutions of $f_{1},\dots,f_{n}$. 

${\bf Output:}$ A set $W_{\itt}$ containing $W$ of isolated solutions
of $f_{1},\dots,f_{n}$, and $\#W_{\itt}$ a lower bound for the excess
number $E_{\bullet}\left(\fI;d_{1},\dots,d_{n}\right)$. 

$\emph{Step 1}:$ Construct the $\hit$-homotopy $\hit\left(t;d_{1},\dots,d_{n}\right)$
and track start solutions $S_{\itt}$ to target solutions $T_{\itt}$. 

$\emph{Step 2:}$ Use a membership test to determine which solutions
of $T_{\itt}$ are isolated and set $W_{\itt}$ to be the union of
$W$ and isolated solutions of $T_{\itt}$. 

$\emph{Step 3:}$ Output $W_{\itt}$ and $\#W_{\itt}$ OR repeat steps
$1-3$ by making a different choice of $\gamma$ in the $\hit$-homotopy.%
\end{minipage}}
\\ 

By taking different choices of $\gamma$ in the $\hit$-homotopy
we were able to produce the following example.
\begin{example}
If we take $A_{1}=z^{2},A_{2}=yz,A_{3}=y^{2}$ , then we have the
excess number $E_{\bullet}\left(\fI';3,3,3\right)=7$. Next,
we construct the $\hit$-homotopy as $\hit\left(t;d_{1},\dots,d_{n}\right)=$

\[
\left[\begin{array}{ccc}
b_{11} & b_{12} & b_{13}\\
b_{21} & b_{22} & b_{23}\\
b_{33} & b_{32} & b_{33}
\end{array}\right]\left(t\left[\begin{array}{c}
z^{2}\\
yz\\
y^{2}
\end{array}\right]+\gamma\left(1-t\right)\left[\begin{array}{c}
z^{2}-yw\\
yz-xw\\
y^{2}-xz
\end{array}\right]\right)
\]
with $a_{ij}$ the same as in Example $\ref{exAlg1}$. We find the
$7$ isolated solutions of $\hit\left(1;d_{1},d_{2},d_{3}\right)$
are $s_{1}',s_{2}',s_{3}',s_{4}',\bar{s}_{2}',\bar{s}_{3}',\bar{s}_{4}'$: 

\[
\begin{array}{ccccc}
 & s'_{1} & s'_{2} & s'_{3} & s'_{4}\\
x & -8.4814 & -.0354+.7868i & .8876+.0702i & -.3053+.4774i\\
y & 8.2976 & -.1201+-.7446i & .3006-.5880i & .3779-.7007i\\
z & -2.9043 & -.9638+.1650i & -1.2929+.2635i & -1.8276+.6092i\\
w & 1 & 1 & 1 & 1
\end{array}.
\]
By taking $\gamma$ to be different  complex numbers and keeping
$b_{ij}$ fixed, with $4$ iterations, we were able to find that $E_{\bullet}\left(\fI;d_{1},\dots,d_{n}\right)$
has a lower bound of $10$. By the previous subsection, we know that
this lower bound is actually sharp.
\end{example}
We comment that with some choices of $A_{1},A_{2},A_{3}$ defining
$\fI$, it can happen that $E_{\bullet}\left(\fI';3,3,3\right)$
is greater than, equal to, or less than $E_{\bullet}\left(\fI;3,3,3\right)$.
So one may be tracking too many paths, too few paths, or perhaps luckily
the right number. Open questions remain about for which choice of
monomials $A_{1},\dots,A_{l}$ yield the best computational results.
In addition, how should we choose $\gamma$ to find new solutions
as we iterate; and how can we verify that our lower
bound has become sharp are also interesting questions. These questions
will remain for future work, and their answers may depend heavily
on the context of the problem. 
\begin{rem}
We remark that the $\hit$-homotopy need not have had the $A_{j}$
be monomials. Any choice of a form $A_{j}$ whose degree equals $B_{j}$
could have been used. However, in this section, we have made the assumption
that excess intersections of monomial ideals can be computed effectively,
as we saw combinatorics can be used to understand excess numbers of
monomial ideals. 
\end{rem}
To conclude, we have shown that determining excess numbers of monomial
ideals can be reduced to computing a mixed volume in some cases.
With this idea, we are able to provide an explicit formula for excess
numbers of ideals with general generators. We presented two algorithms
using numerical algebraic geometry to determine excess numbers of
any ideal. We also demonstrated that these algorithms have successfully
lead to the calculation of excess numbers of an ideal defining the
twisted cubic.
 We believe that the the $\hup$-homotopy can compute excess numbers of many other ideals defined by sparse forms in many unknowns.

\bibliographystyle{plain}
\nocite{*}

(Jose Israel Rodriguez) \texttt{Department of Mathematics, The University of\break
California at
 Berkeley, 970 Evans Hall 3840, Berkeley, CA 94720-3840
USA }

$\emph{E-mail address}$: jo.ro@berkeley.edu 
\end{document}